\pdfoutput=1
%-----------------------------------------------------------------------
%    Beginning of article.tex
%-----------------------------------------------------------------------
%
%    This is an AMS-LaTeX sample proceedings article file for use with
%    the amsproc document class and author packages based on amsproc.
%
%    Replace amsproc by the document class name for the target series,
%    e.g. pspum-l.
%
\documentclass{amsproc}

\newtheorem{theorem}{Theorem}[section]

\theoremstyle{definition}

\newtheorem{example}[theorem]{Example}

\theoremstyle{remark}
\newtheorem{remark}[theorem]{Remark}

\numberwithin{equation}{section}

%    Absolute value notation

%    Blank box placeholder for figures (to avoid requiring any
%    particular graphics capabilities for printing this document).

\begin{document}

\title{Einstein warped-product manifolds and the screened Poisson equation}

%    Information for first author
\author[A. Pigazzini]{Alexander Pigazzini}
%    Address of record for the research reported here
\address{Mathematical and Physical Science Foundation, Sidevej 5, 4200 Slagelse, Denmark}
%    Current address
%%\curraddr{Department of Mathematics and Statistics,
%%Case Western Reserve University, Cleveland, Ohio 43403}
\email{pigazzini@topositus.com}
%    \thanks will become a 1st page footnote.
%%\thanks{The first author was supported in part by NSF Grant \#000000.}

%    Information for second author
\author[L. Lussardi]{Luca Lussardi}
\address{Dipartimento di Scienze Matematiche ``G.L.\,Lagrange'', Politecnico di Torino, c.so Duca degli Abruzzi 24, I-10129 Torino, Italy}
\email{luca.lussardi@polito.it}
%%\thanks{Support information for the second author.}

%    Information for third author
\author[M. Toda]{Magdalena Toda}
\address{Department of Mathematics and Statistics, 1108 Memorial Circle, MS 1042, Texas Tech University, Lubbock TX 79409-1042, U.S.A}
\email{magda.toda@ttu.edu}

%    Information for fourth author
\author[A. D{e}{B}enedictis]{Andrew D{e}{B}enedictis}
%    Address of record for the research reported here
\address{The Pacific Institute for the Mathematical Sciences \\  and \\ Faculty of Science, Simon Fraser University,  Burnaby, British Columbia, V5A 1S6, Canada}
%    Current address
%%\curraddr{Department of Mathematics and Statistics,
%%Case Western Reserve University, Cleveland, Ohio 43403}
\email{adebened@sfu.ca}
%    \thanks will become a 1st page footnote.
%%\thanks{The first author was supported in part by NSF Grant \#000000.}

%    General info
%%\subjclass[2020]{Primary 53C25, 53C21; Secondary 46E25, 20C20}
\subjclass[2020]{Primary 53C25, 53C21}
%%\date{January 1, 1994 and, in revised form, June 22, 1994.}
\date{May 3, 2024.}

%%\dedicatory{This paper is dedicated to our advisors.}

\keywords{Warped-product manifolds; constant negative curvature; screened Poisson equation; time-independent Klein-Gordon equation.}

\begin{abstract}
We study a particular type of Einstein warped-product manifold where the warping function must satisfy the homogeneous version of the screened Poisson equation. 
Under these assumptions, we show that the dimension of the manifold, the (constant negative) Ricci curvature and the screened parameter are related through a quadratic equation.
\end{abstract}

\maketitle

\section{Introduction and Preliminaries}

\subsection{Einstein warped product manifolds}

Warped-product manifolds, first introduced by Bishop and O'Neill (\cite{Bishop}), have gained significant importance in the field of differential geometry and are widely researched in the domain of General Relativity, particularly in relation to generalized Friedmann-Robertson-Walker spacetimes. A wealth of properties for warped product manifolds and submanifolds have been presented in various studies (\cite{Chen}, \cite{Kim} and \cite{KK}).

To construct a warped-product manifold, consider two semi-Riemannian manifolds $(B, g_B)$ and $(F, g_F)$, and let $\tau$ and $\sigma$ be the projections of $B \times F$ onto $B$ and $F$ respectively. The warped-product $M=B \times_f F$ is the manifold $B \times F$ with the metric tensor $g=\tau^*g_B+f^2 \sigma^*g_F$, where $^*$ denotes the pullback and $f$ is a positive smooth scalar function on $B$, known as the warping function.

In more explicit terms, if $X$ is tangent to $B \times F$ at $(a,b)$ (where $a$ is a point on $B$ and $b$ is a point on $F$), then:
\\
{\centerline {$\langle X,X \rangle=\langle d\tau(X),d\tau(X)\rangle+f^2(a)(d\sigma(X),d\sigma(X))$.}}
\\
\\
In this context, $B$ is referred to as the \textit{base-manifold} of $M=B \times_f F$ and $F$ is the \textit{fiber-manifold}. If $f=1$, then $B \times_f F$ simplifies to a semi-Riemannian product manifold. The leaves $B \times b = \sigma^{-1}(b)$ and the fibers $a \times F =\tau^{-1}(a)$ are Riemannian submanifolds of $M$. Vectors tangent to leaves are termed horizontal and those tangent to fibers are termed vertical. The orthogonal projection of $T_{(a,b)}M$ onto its horizontal subspace $T_{(a,b)}(B \times b)$ is denoted by $\mathcal{H}$, and $\mathcal{V}$ denotes the projection onto the vertical subspace $T_{(a,b)}(a \times F)$ (\cite{O'Neill}).

If $M$ is an $n$-dimensional manifold with metric tensor $g_M$, the Einstein condition gives $Ric_M = \lambda g_M$ for some real constant $\lambda$, where $Ric_M$ represents the Ricci tensor of $g_M$. An Einstein manifold with $\lambda = 0$ is considered Ricci-flat.

With this understanding, we find that a warped-product manifold $(M, g_M)=(B,g_B)\times_f(F,g_F)$ (where ($B, g_B$) is the base-manifold, ($F, g_F$) is the fiber-manifold), with 
$g_M=g_B+f^2 g_F$,  is Einstein if and only if (refer to \cite{Chen}):
\\
\begin{equation}
Ric_M=\lambda g_M \Longleftrightarrow\begin{cases} 
 Ric_B- \frac{m}{f}Hess(f)= \lambda g_B  \\  Ric_F=\mu g_F \\ f \Delta f+(m-1) |\nabla f|^2 + \lambda f^2 =\mu,
\end{cases} \label{eq:cases}
\end{equation}
\\
where $\lambda$ and $\mu$ are constants, $m$ is the dimension of $F$, $Hess(f)$, $\Delta f$ and $\nabla f$ are, 
respectively, the Hessian, the Laplacian (given by $tr(Hess(f))$) and the gradient of $f$ with respect to  $g_B$, with $f:B \rightarrow \mathbb{R}^+$ a smooth positive function. Regarding the constant $\mu$, for the system to produce a manifold that obeys the Einstein condition $Ric_{M}=\lambda g_{M}$ the last expression in (\ref{eq:cases}) must be constant. The constant $\mu$ is specifically associated with the Ricci curvature of the fiber manifold $F$. The third equation in the above system (which includes $\mu$) ensures the consistency of the curvature conditions over the entire manifold $M$, relating the warping function $f$, its derivatives and the curvatures $\lambda$ and $\mu$.
\\
\\
Contracting the first equation of (1.1) we get: 
\\
\begin{equation}
R_Bf- m\Delta_{g_B} f=n f \lambda,
\end{equation}
where $n$ and $R_B$ are the dimension and the scalar curvature of $B$, respectively, then (1.1), contracting also the second equation, becomes:
\\
\begin{equation}
R_M=\lambda (n+m) \Longleftrightarrow\begin{cases} 
R_Bf- m\Delta_{g_B} f=n f \lambda \\ R_F=\mu m \\ f \Delta_{g_B} f+(m-1) |\nabla_{g_B} f|^2 + \lambda f^2 =\mu.
\end{cases}
\end{equation}

\begin{remark} It is a well-established fact that an Einstein warped-product manifold, equipped with a Riemannian metric and a Ricci-flat fiber-manifold ($F$), cannot admit positive Ricci curvature solutions (see \cite{BL}). It can only accommodate zero or negative solutions. This conclusion is derived from the Bonnet-Myers theorem. Essentially, this theorem states that if $Ric_M>0$, then $M$ is compact. However, if $V$ is a vertical vector (that is, a tangent vector on the fiber-manifold), then the formula for the Ricci curvature of warped product manifolds (as outlined in \cite{Besse}) yields: 
\\
\centerline{$Ric_M(V, V)=Ric_F(V, V)-|V|^2(\frac{\Delta f}{f}+(m-1)\frac{|\nabla f|^2}{f^2})$.}
\\
If the fiber-manifold $F$ is Ricci-flat, then it follows that at the point where the warping function $f$ reaches its minimum on the base-manifold (which is guaranteed to exist due to compactness), the condition $Ric_M(V, V) \le 0$ is satisfied.
\\
In this paper we are only interested in cases $R_F=0$ with $Ric_M<0$.
\end{remark}

\subsection{Screened Poisson equation and Klein-Gordon equation}

In the realm of mathematics, the screened Poisson equation is a specific type of partial differential equation, conventionally represented as $[\Delta-\lambda^2]\phi(r)=-\psi(r)$. Here, $\Delta$ denotes the Laplace-Beltrami operator, $\lambda$ is a constant, $\psi$ is a function of position ($r$) known as the ``source function'', and $\phi$ is the function to be determined.
When $\lambda$ is zero, this equation simplifies to the Poisson equation.

A class of solutions to the inhomogeneous screened Poisson equation for a general $\psi$ can be found using the method of Green functions. In this context, the Green function $G$ is defined by the equation $[\Delta-\lambda^2]G(r)=-\delta^3(r)$. This equation frequently appears in various fields of physics, including the Yukawa theory of mesons, where the Green function represents the Yukawa potential \cite{ref:ypot}. The screened Poisson equation also appears in plasma screening, for example in limits of the Debye-H\"{u}ckel theory \cite{ref:dhthy} or the Thomas-Fermi theory \cite{ref:tfthy}. The equation also has applicability in granular fluid flow \cite{ref:fflow1}, \cite{ref:fflow2} and also emerges in the Klein–Gordon equation, which, aside from its importance in physics, holds a significant role in the theory of integrable systems or geometric PDE. This is a rapidly growing subfield at the intersection of differential geometry, differential equations, and mathematical physics.

The Klein-Gordon equation can be transformed into the form of a Schrödinger equation (as two coupled differential equations, each of first order in time) \cite{ref:schro}. However, the Klein-Gordon equation carries a distinct meaning. Typically, in a physics course setting, it is presented with separated time and space. For instance, with respect to natural units and the Minkowski metric (+, -, -, -) (up to a potential change in sign), and in the case of time-independence ($[\Delta-\frac{m^2c^2}{\hbar}]\phi(r)=0$), the Klein-Gordon equation is equivalent to the homogeneous form ($\psi=0$) of the screened Poisson equation: $[\Delta-\lambda^2]\phi(r)=0$. Here, the constant $\lambda^2=\frac{m^2c^2}{\hbar}$, where $c$ and $\hbar$ represent the speed of light and Planck's constant, respectively.

\section{Einstein warped-product manifolds and homogeneous form of the screened Poisson equation} 
%% {\bfseries{Theorem 1:}}
\begin{theorem} \label{thm:1} Let $M$ be an Einstein warped-product manifold with constant negative Ricci curvature ($\lambda <0$) and $m$-dimensional Ricci-flat fiber manifold ($F$), and let the base-manifold ($B$) be a $2$-dimensional manifold with negative constant Gaussian curvature $K$. There exists a
warping function $f$ satisfying the homogeneous form of the Screened Poisson equation, i.e. $[\Delta_{g_B}-1]f=0$, if and only if
$dim(M) = 2 + m$ and the constant $\lambda$ of curvature are related by the following quadratic equation:
\begin{equation}
\lambda^2(2-m)+\lambda(1+\frac{3m}{2}-\frac{m^2}{2})+\frac{m^2}{2}+\frac{m}{2}=0. 
\end{equation}
\end{theorem}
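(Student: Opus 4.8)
The plan is to collapse the full Einstein system onto the warping function, use that $B$ is a surface in order to solve the screened Poisson equation explicitly, and then eliminate the geometric parameters. First I would record the reductions forced by the hypotheses: a Ricci-flat fibre gives $\mu=0$, while a surface of constant Gaussian curvature $K$ has $R_B=2K$ and $Ric_B=Kg_B$. Substituting $\Delta_{g_B}f=f$ into the contracted equation (1.2) gives $2K-m=2\lambda$, so
\[ K=\lambda+\tfrac{m}{2}, \]
and substituting $\Delta_{g_B}f=f$ and $\mu=0$ into the third equation of (1.3) gives $(1+\lambda)f^2+(m-1)|\nabla_{g_B}f|^2=0$, which forces
\[ |\nabla_{g_B}f|^2=\beta f^2,\qquad \beta:=\frac{-(1+\lambda)}{m-1}. \]

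The decisive observation is that the third equation pins $|\nabla_{g_B}f|^2$ to be \emph{exactly} proportional to $f^2$, with no additive constant; this rigidly constrains $f$. Writing the base in horocyclic coordinates $g_B=dt^2+e^{2at}dx^2$, so that $K=-a^2$, I would take the horospherical warping function $f=e^{bt}$, for which a direct computation gives $|\nabla_{g_B}f|^2=b^2f^2$ and $\Delta_{g_B}f=(b^2+ab)f$. Comparison with the two displayed identities yields $b^2=\beta$, while the screened Poisson equation itself becomes the compatibility relation $b^2+ab=1$, equivalently $a^2=(1-\beta)^2/\beta$; since $K=-a^2$ this reads
\[ K=-\frac{(1-\beta)^2}{\beta}. \]

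It then remains to equate the two expressions for $K$ and substitute $\beta=-(1+\lambda)/(m-1)$: clearing denominators and collecting powers of $\lambda$ should reproduce equation (2.1) coefficient by coefficient. For the converse I would reverse the construction: given $(\lambda,m)$ with $\lambda<0$ satisfying (2.1), solve for $\beta$, then for $a$ and $b$, check the positivity required for a genuine negative-curvature surface (indeed the admissible root gives $\beta=\tfrac12$, $a=b$ and $K=-\tfrac12$, which even upgrades the solution to the full tensorial condition (1.1)), and exhibit $g_B=dt^2+e^{2at}dx^2$ together with $f=e^{bt}$ as an explicit model realizing all the hypotheses.

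I expect the real work to be twofold. Conceptually, the step to argue carefully is that $\Delta_{g_B}f=f$ together with $|\nabla_{g_B}f|^2=\beta f^2$ genuinely forces $K=-(1-\beta)^2/\beta$: one must show that a function whose logarithm has constant gradient norm on a constant-curvature surface is, up to isometry, the Busemann exponential $e^{bt}$, whose Laplacian is governed by the horocycle mean curvature $\sqrt{-K}$. Computationally, the obstacle is simply carrying out the elimination of $a$, $b$, $\beta$ and $K$ cleanly enough that the resulting polynomial matches (2.1) exactly, since an error in any sign or cross term would spoil the identification.
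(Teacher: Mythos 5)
Your reduction of the Einstein system coincides with the paper's own first steps: $\mu=0$ from Ricci-flatness of $F$, the traced first equation giving $K=\lambda+\frac{m}{2}$ (the paper's (2.8)), and the fiber equation giving $|\nabla_{g_B}f|^2=\beta f^2$ with $\beta=-(1+\lambda)/(m-1)$ (the paper's (2.9)). From there, however, you take a genuinely different route. The paper works with Cartan moving frames: it proves that the system $\Delta f=p(f)q(f)$, $|\nabla f|^2=p(f)^2$ admits a solution on a pseudospherical surface if and only if the integrability condition (2.4), $pp''-(p')^2+2qp'-pq'-q^2+1=0$, holds; it then rescales the metric to handle general $K<0$ and specializes to the linear $\bar p$ and constant $\bar q$ arising here, which collapses (2.4) into (2.13) and hence (2.1). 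You instead build the explicit horocyclic model $g_B=dt^2+e^{2at}dx^2$, $f=e^{bt}$, where $K=-a^2$, $|\nabla f|^2=b^2f^2$, $\Delta f=(b^2+ab)f$, and eliminate via $b^2=\beta$, $ab=1-\beta$ to get $K=-(1-\beta)^2/\beta$. Your elimination is correct: equating with $K=\lambda+\frac{m}{2}$ gives $(\lambda+\frac{m}{2})(m-1)(1+\lambda)=(m+\lambda)^2$, which expands exactly to (2.1). Your route buys two things the paper's does not make explicit. First, it is elementary (no frames, no Hodge star). Second, and more substantively: since the only root of (2.1) compatible with $\lambda\le -1$ is $\lambda=-(m+1)/2$ (the other root is $m/(m-2)>0$ for $m>2$), the admissible case is $\beta=\tfrac12$, $a=b$, and then $Hess(f)=b^2f\,g_B$ is proportional to $g_B$, so your model satisfies the full tensorial Einstein condition (1.1) and not merely the traced scalar system (1.3)/(2.3), which is all the paper's sufficiency construction actually verifies. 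In exchange, the paper's method yields the general criterion (2.4), valid for arbitrary pairs $(p,q)$, of which the present theorem is a special case.

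The one obligation you leave open is the necessity direction: you must show that \emph{any} solution of $\Delta_{g_B}f=f$, $|\nabla_{g_B}f|^2=\beta f^2$ on a surface of constant curvature $K<0$ forces $K=-(1-\beta)^2/\beta$, not merely that your exponential ansatz is self-consistent. You flag this correctly, but the Busemann-function rigidity you propose is stronger than what is needed, and it should not be left as a black box. It closes in three lines via the Bochner formula: set $v=\ln f/\sqrt{\beta}$, so that $|\nabla v|^2=1$ and $\Delta v=(1-\beta)/\sqrt{\beta}$ is constant; then
\begin{equation*}
0=\tfrac{1}{2}\Delta|\nabla v|^2=|Hess(v)|^2+\langle\nabla v,\nabla(\Delta v)\rangle+Ric_B(\nabla v,\nabla v)=(\Delta v)^2+0+K,
\end{equation*}
where the last equality uses $Ric_B=Kg_B$ and the fact that $Hess(v)$ annihilates $\nabla v$ (since $|\nabla v|^2$ is constant), so that on a surface its squared norm equals its trace squared. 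Hence $(1-\beta)^2/\beta=-K$ for every solution, after which your elimination finishes the proof. With this lemma inserted, your argument is complete, and in the sufficiency direction it is in fact sharper than the paper's.
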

%%\textit{Proof.} 

\begin{proof} Let consider (1.3) for a $2$-dimensional base-manifold, i.e. $n=2$:
\\
\begin{equation}
R_M=\lambda (2+m) \Longleftrightarrow\begin{cases} 
R_Bf- m\Delta_{g_B} f=2 f \lambda \\ R_F=0 \\ f \Delta_{g_B} f+(m-1) |\nabla_{g_B} f|^2 + \lambda f^2 =0.
\end{cases}
\end{equation}
\\
Considering initially, for ease of calculation, that $B$ is a surface with constant Gaussian curvature $K = -1$, it is simple to notice that a general approach to this problem can be traced back to the following system of equations:
\\
\begin{equation}
\begin{cases}
\Delta_{g_B} f=p(f)q(f) \\ |\nabla_{g_B} f|^2 = p(f)^2.
\end{cases}
\end{equation}
\\
Consider two real-valued functions, $p > 0$ and $q$, defined over an interval $I\subset\mathbb{R}$. Our objective is to ascertain the existence of a solution, $f$, for the system
$
|\nabla f|^2 = p(f)^2, \Delta f = p(f)q(f)
$
on a certain (nonempty) open subset of the Poincaré upper half-plane, which is a Riemannian surface exhibiting a Gauss curvature $K$ that remains consistently equal to $-1$, commonly referred to as a pseudospherical surface. We assert that if and only if the functions $p$ and $q$ satisfy the following differential equation, then a solution $f$ exists:
\\
\begin{equation}
p(t)p''(t)-p'(t)^2 + 2q(t)p'(t)-p(t)q'(t)-q(t)^2 + 1 = 0.
\end{equation}
\\
Working with Cartan differential forms associated to a Cartan moving frame is the most natural setting here. Let's start by noting that if such $f$ exists, then the metric $g$ can be written in the form $g = {\omega_1}^2 + {\omega_2}^2$ where $\omega_1 = (\mathrm{d}f)/p(f)$ and $\ast\mathrm{d}f = p(f)\,\omega_2$. Here $\ast$ is the usual Hodge operator.  
\\
Since $\mathrm{d}(\ast\mathrm{d}f) = \Delta f\,\omega_1\wedge\omega_2$, it follows that $\mathrm{d}(p(f)\,\omega_2) = p(f)q(f)\,\omega_1\wedge\omega_2$. This relates to the equality $\mathrm{d}(p(f)) = p'(f)\,\mathrm{d}f=p(f)p'(f)\,\omega_1$, and thus we must obtain that $\mathrm{d}\omega_2 = \bigl(q(f)-p'(f)\bigr)\,\omega_1\wedge\omega_2$. Further, since $\mathrm{d}\omega_1 = -\omega_{12}\wedge\omega_2$ and $\mathrm{d}\omega_2 =\omega_{12}\wedge\omega_1$, it follows that $\omega_{12} = \bigl(p'(f)-q(f)\bigr)\,\omega_2$. The expansion of the Gauss-Codazzi equation $\mathrm{d}\omega_{12} = K\,\omega_1\wedge\omega_2 = -\omega_1\wedge\omega_2$ allows us to obtain equation (2.4).
\\
Conversely, assuming that $p$ and $q$ satisfy (2.4), we proceed to examine the following equations:
\\
\\
{\centerline{$\omega_1 = \mathrm{d}f/p(f)$, and}}
\begin{equation}
\mathrm{d}\omega_2 = \bigl(q(f)-p'(f)\bigr)\,\omega_1\wedge\omega_2
= \bigl(q(f)-p'(f)\bigr)/p(f)\,\mathrm{d}f\wedge\omega_2.
\end{equation}
Considering a positive function $s$ (according to the linear Ordinary Differential Equation theory), defined on interval $I$ (unique up to a constant multiple) we get:
\\
\begin{equation}
s'(f) = s(f) \bigl(q(f)-p'(f)\bigr)/p(f).
\end{equation}
\\
The above equations (2.5) and (2.6) imply that $\mathrm{d}\bigl(\omega_2/s(f)\bigr)=0$, therefore, if we assume the domain to be simply connected, for a certain function $h$ we have $\omega_2 = s(f)\,\mathrm{d}h$. 
\\
Consequently, equations (2.4) and (2.6) indicate that the metric:
\\
\\
{\centerline{$g = \left(\frac{\mathrm{d}f}{p(f)}\right)^2 + \left(s(f)\,\mathrm{d}h\right)^2$}}
\\
\\
on $I\times\mathbb{R}$ (with coordinates $f$ and $h$), maintains a constant Gaussian curvature equal to $-1$, thereby enabling an isometric immersion into a domain within the Poincaré upper half-plane.
\\
Therefore we showed that, if satisfied, equation (2.4) determines the existence of a solution for $f$ such that $M$ is an Einstein warped-product  manifold with negative constant Ricci curvature, $m$-dimensional Ricci-flat fiber-manifold $F$ and the base-manifold ($B$) a $2$-dimensional hyperbolic manifold (i.e., with Gaussian curvature $K = -1$).
\\
Our aim is to show, in a more general case, (that is, considering $B$ a $2$-dimensional manifold with negative constant Gaussian curvature, not necessarily $K=-1$), that if the warping function $f$, defined on $B$, satisfies $\Delta_ {g_B} f = f$, then between the dimension of $M$ and the constant $\lambda$, there is a second order equation that relates them.
\\
In case the given metric $g_B$ has a constant Gaussian curvature $K<0$, rescaling the metric $\bar g_B = (-K)\, g_B$, we obtain the corresponding Gauss curvature in the new metric, namely $\bar K = -1$. So, from (2.3), we will have:
\\
\begin{equation}
\begin{cases} 
\Delta_{\bar g_B} f = (-K)^{-1}\,\Delta_{g_B} f \\ |\nabla_{\bar g_B} f|^2 = (-K)^{-1}\,|\nabla_{g_B} f|^2 ,
\end{cases}
\end{equation}
and from this, now it is simple to determine $\bar p$ and $\bar q$. Therefore, in the most general scenario, we use (2.4) with $(\bar p,\bar q)$.
\\
That said, considering (1.3) with the settings expressed in \textit{Theorem \ref{thm:1}}, we obtain:
\\
\begin{equation}
\begin{cases} 
 R_B = 2\lambda + m  \\  R_F=0 \\ |\nabla_{g_B} f|^2 =-\frac{f^2(\lambda + 1)}{m-1},
\end{cases}
\end{equation}
and the second equation of (2.3) is equal to:
\\
\begin{equation}
 |\nabla_{g_B} f|^2 =\frac{f^2(-\lambda - 1)}{m-1}=p(f)^2,
\end{equation}
then
\begin{equation}
 \nabla_{g_B} f=\frac{f\sqrt{-\lambda - 1}}{\sqrt{m-1}}=p(f),
\end{equation}
therefore from (2.10) we deduce that $\lambda$ must be $\le -1$.
\\
Now, considering (2.7) (which corresponds to (2.3) with rescaled metric), we obtain:
\\
\begin{equation}
\begin{cases} 
\frac{f}{-K} = \bar p(f) \bar q(f) \Longleftrightarrow \bar q(f)=\frac{\sqrt{m-1}}{\sqrt{-\lambda-1}}\frac{1}{\sqrt{-K}} \\ \frac{f\sqrt{-\lambda - 1}}{\sqrt{m-1}}\frac{1}{\sqrt{-K}} =\bar p(f).
\end{cases}
\end{equation}
\\
If we consider (2.4):
\\
\begin{equation}
\begin{cases} 
\bar p''(f)=0 \\ -\bar p'(f)^2=\frac{\lambda+1}{m-1}(- K)^{-1}\\ 2 \bar q(f) \bar p'(f)=2  (- K)^{-1} \\ \bar q'(f)=0 \\ - \bar q(f)^2=\frac{1-m}{\lambda + 1}\frac{1}{K}.
\end{cases}
\end{equation}
\\
Since $K=\frac{R_B}{2}$, from the first equation of (2.8), we get $-K=-(\lambda +\frac{m}{2})$, hence (2.4) is equivalent to:
\\
\begin{equation}
\frac{\lambda+1}{m-1}+2+\frac{m-1}{\lambda+1}- \Bigl(\lambda +\frac{m}{2}\Bigr)=0,
\end{equation}
or
\\
\begin{equation}
\lambda^2(2-m)+\lambda \Bigl(1+\frac{3m}{2}-\frac{m^2}{2}\Bigr)+\frac{m^2}{2}+\frac{m}{2}=0.
\end{equation}\end{proof}
The second degree equation (2.14) relates the dimension $m$ of the fiber-manifold ($F$) and the constant $\lambda$ (where $\lambda \in \mathbb{R}^-$), in order to define an Einstein warped-product manifold ($M$) with constant negative Ricci curvature and a $2$-dimensional base-manifold ($B$) with constant negative Gaussian curvature, on which the defined warping function ($f$), satisfies the homogeneous form of Screened Poisson equation.
%%\end{proof}

\begin{remark}
More generally we can consider $[\Delta_{g_B}-\beta]f=0$, with constant $\beta \in \mathbb{R}^+$, so (2.14) will be:
\\
\begin{equation}
\lambda^2(2-m)+\lambda \Bigl( \beta+\frac{3m\beta}{2}-\frac{m^2\beta}{2}\Bigr)+m^2 \Bigl(1-\frac{\beta^2}{2}\Bigr)+m \Bigl(\frac{5\beta^2}{2}-2 \Bigr)=0. 
\end{equation}
\\
We have obtained that for this special family of Einstein warped-product manifolds, the second order equation (2.15) establishes a relation, in general, between the dimension (i.e., $2 +m$), the constant of curvature ($\lambda$) and the ``screened'' parameter ($\beta$). These manifolds exist if and only if the equation (2.15) admits at least one negative real value as a solution for the dimension and screened parameter chosen.
\end{remark}

\begin{example}
If we consider that $B$ is, for example, the upper Poincaré half-plane, then the line element of $M$, $ds^2$, will be:
\\
\begin{align}
ds^2 & =  g_{B_{ij}}dx^{i}dx^{j} +f^2  g_{F_{uv}}dx^u dx^v\,\label{eq:linel} \\[0.1cm]
&=4\frac{dx_1^2+dx_2^2}{(1-x_1^2-x_2^2)^2}+f^2  g_{F_{uv}}dx^{u}dx^{v}\,, \nonumber
\end{align}
where  $u, v=3, 4, 5,...$, and $ g_{F_{uv}}$ represents the components of the (semi)Riemannian Ricci-flat metric on $F$. Then $|g_B|=\frac{16}{(1-x_1^2-x_2^2)^4}$ and $\sqrt{|g_B|}=\frac{4}{(1-x_1^2-x_2^2)^2}$.
\\
From $[\Delta_{g_B}-\beta]f=0 \longrightarrow \nabla_i \partial^i f -\beta f=0$, where $\nabla_i \partial^i f=\frac{1}{\sqrt{|g_B|}}\partial_i(\sqrt{|g_B|}g_{B}^{ij} \partial_j f)$, we get: $\nabla_i \partial^i f=\frac{(1-x_1^2-x_2^2)^2}{4} \partial_i \Bigl[\frac{4}{(1-x_1^2-x_2^2)^2} g^{ij} \partial_j f \Bigr]$.
\\
Now, after some straight-forward computation, we obtain: $\Delta_{g_B} f=\frac{(1-x_1^2-x_2^2)^2}{4}\Bigl [\frac{\partial^2 f}{\partial x^2_1}+\frac{\partial^2 f}{\partial x^2_2}\Bigr ]$.
\\
\\
In summary, if and only if the equation (2.15) will be satisfied, will there exist a function $f$ such that it satisfies the following:
\\
\begin{equation}
\frac{(1-x_1^2-x_2^2)^2}{4}\Bigl[ \frac{\partial^2 f}{\partial x^2_1}+\frac{\partial^2 f}{\partial x^2_2} \Bigr] - \beta f=0.
\end{equation}
\end{example}

\section{Open questions and future directions}

An interesting future direction would be to extend these results to other cases of Einstein-warped product manifolds, wherever there is a well-understood corresponding family of integrable systems (geometric PDEs).

\section*{\small{Acknowledgements}}
The authors would like to thank the anonymous referee for suggestions which have improved the manuscript. We also would like to thank the journal editors.

%%\bibliographystyle{amsalpha}
%%\begin{thebibliography}{A}
\bibliographystyle{amsplain}

\begin{thebibliography}{A}

% \bibitem [A]{A} T. Aoki, \textit{Calcul exponentiel des op\'erateurs
% microdifferentiels d'ordre infini.} I, Ann. Inst. Fourier (Grenoble)
% \textbf{33} (1983), 227--250.
% 
% \bibitem [B]{B} R. Brown, \textit{On a conjecture of Dirichlet},
% Amer. Math. Soc., Providence, RI, 1993.
% 
% \bibitem [D]{D} R. A. DeVore, \textit{Approximation of functions},
% Proc. Sympos. Appl. Math., vol. 36,
% Amer. Math. Soc., Providence, RI, 1986, pp. 34--56.

\bibitem{Bishop} Bishop, R. L., O'Neil, B.: \textit{Manifolds of negative curvature}, \textbf{}Trans. Am. Math. Soc. 145, 1-49 (1969). 

\bibitem{Chen} Chen, B.-Y.: \textit{Differential Geometry of Warped Product Manifolds and Submanifolds}, \textbf{} World Scientific, (2017).

\bibitem{Kim}  Kim, S.: \textit{Warped products and Einstein metrics}, \textbf{}J. Phys. A Math. Gen. 39, L329–333 (2006).

\bibitem{KK}  Kim, D.-S., Kim, Y.H.: \textit{Compact Einstein warped product spaces with nonpositive scalar curvature}, \textbf{}Proc. Am. Math. Soc. 131, 2573–2576 (2003).

\bibitem{O'Neill} O'Neill, B.: \textit{Semi-Riemannian Geometry with applications to Relativity}, \textbf{}Academic Press, (1983).

\bibitem{BL} Leandro, B., Lemes de Sousa, M., Pina, R.: \textit{On the structure of Einstein warped product semi-Riemannian manifolds}, \textbf{} Journal of Integrable Systems, 3(1), (2018).

\bibitem{Besse}  Besse, A. L.: \textit{Einstein Manifolds}, \textbf{}Springer, Berlin (1987).

\bibitem{ref:ypot}  Machleidt, R.: \textit{Phenomenology and Meson Theory of Nuclear Forces}. In: I. Tanihata, H. Toki, T. Kajino,(eds) \textit{Handbook of Nuclear Physics}. Springer, Singapore, (2023).

\bibitem{ref:dhthy} Debye, P., and H\"{u}ckel, E.: \textit{Zur Theorie der Elektrolyte. I. Gefrierpunktserniedrigung und verwandte Erscheinungen}, \textbf{} Physik. Zeit. 24, 185-206, (1923).

\bibitem{ref:tfthy} Ashcroft, N.W. and Mermin, N.D.: \textit{Solid State Physics}, Thomson Learning, Toronto, (1976).

\bibitem{ref:fflow1} Kamrin, K. and G. Koval, G.: \textit{Phys. Rev. Let.}, \textbf{} 108, 178301, (2012).

\bibitem{ref:fflow2} Tanios, R., El Mohtar, S., Knio, O., Lakkis, I.: \textit{ arXiv:1911.10944v1}[math.NA], (2019).

\bibitem{ref:schro} Greiner, W.:  \textit{Relativistic quantum mechanics}, third ed., Springer-Verlag, Berlin, (2000).
\\[0.5cm]




\end{thebibliography}

\end{document}